\DeclareFontFamily{U}{euf}{}
\DeclareFontShape{U}{euf}{m}{n}{%
  <5><6><7><8><9>gen*eufm%
  <10><10.95><12><14.4><17.28><20.74><24.88>eufm10%
  }{}
\DeclareFontShape{U}{euf}{b}{n}{%
  <5><6><7><8><9>gen*eufb%
  <10><10.95><12><14.4><17.28><20.74><24.88>eufb10%
  }{}
\DeclareFontFamily{U}{msb}{}
\DeclareFontShape{U}{msb}{m}{n}{%
  <5><6><7><8><9>gen*msbm%
  <10><10.95><12><14.4><17.28><20.74><24.88>msbm10%
  }{}
\DeclareFontFamily{U}{msa}{}
\DeclareFontShape{U}{msa}{m}{n}{%
  <5><6><7><8><9>gen*msam%
  <10><10.95><12><14.4><17.28><20.74><24.88>msam10%
  }{}
\newtheorem{theorem}{Theorem}[section]
\newtheorem{lemma}[theorem]{Lemma}
\newtheorem{proposition}[theorem]{Proposition}
\newtheorem{corollary}[theorem]{Corollary}
\theoremstyle{definition}
\newtheorem{example}[theorem]{Example}
\theoremstyle{remark}
\newtheorem{remark}[theorem]{Remark}
\numberwithin{equation}{section}
\begin{document}
%%%%% To ease editing, for IMPAN journals add:

\baselineskip=17pt

%%%%%%%%%%%

%% In the running head, replace first names by initials
%% and give an abbreviation of the title.
\title[]
{On the integral of the product of four and more Bernoulli polynomials}

\author{Su Hu}

\address{Department of Mathematical Sciences, Korea Advanced Institute of Science and Technology (KAIST), Daejeon 305-701, South Korea \\Present address: Department of Mathematics and Statistics, McGill University, 805 Sherbrooke St. West, Montr\'eal, Qu\'ebec H3A 2K6, Canada}
\email{husu@kaist.ac.kr, hu@math.mcgill.ca}

\author{Daeyeoul Kim}

\address{National Institute for Mathematical Sciences \\ Doryong-dong \\ Yuseong-gu \\
Daejeon 305-340 \\ South Korea}
\email{daeyeoul@nims.re.kr}

\author{Min-Soo Kim}

\address{Division of Cultural Education, Kyungnam University,
7(Woryeong-dong) kyungnamdaehak-ro, Masanhappo-gu, Changwon-si, Gyeongsangnam-do 631-701, South Korea}
\email{mskim@kyungnam.ac.kr}

%\thanks{*Corresponding author}

\begin{abstract}
In 1958, L.J. Mordell   provided the formula  for the integral of the product of two Bernoulli polynomials, he also remarked: ``The integrals containing the product of more than two Bernoulli polynomials do not appear to lead to simple results."  In this paper, we  provide  explicit
formulas for the integral of the product of $r$ Bernoulli polynomials, where $r$ is any positive integer. Many authors' results in this direction, including N\"orlund, Mordell,
 Carlitz, Agoh and Dilcher are special cases of the formulas given in this paper.
\end{abstract}

\subjclass[2010]{11B68.}
\keywords{Bernoulli polynomials; Bernoulli numbers; Integrals; Recurrence relations}

%\thanks{Received May 24, 2009}

\maketitle

%%
%% Start line numbering here if you want
%%
% \linenumbers

%% main text

\def\ord{\text{ord}_p}
\def\C{\mathbb C_p}
\def\BZ{\mathbb Z}
\def\Z{\mathbb Z_p}
\def\Q{\mathbb Q_p}
\def\wh{\widehat}
\def\ov{\overline}

\section{Introduction}
\label{Intro}
Nearly 90 years ago, N\"orlund gave the following formula for the integral of the product of two Bernoulli polynomials in his book ``Vorlesungen uber Differenzenrechnung":
\begin{equation}\label{2-in-pro}
\int_0^1B_k(z)B_l(z)dz=(-1)^{k-1}\frac{k!l!}{(k+l)!}B_{k+l}, \; k+l\geq2
\end{equation} (see \cite[p.~31]{Ni}).

Nielsen and Mordell provided two different proofs of (\ref{2-in-pro}) in~\cite{Ni} and~\cite{Mo} respectively. Mordell remarked:`` The integrals containing the product of more than two Bernoulli polynomials do not appear to lead to simple results." (See \cite[p.~375]{Mo}).
But Carlitz \cite{Ca} obtained an explicit formula for the integral of the product of  three and four Bernoulli polynomials which extends (\ref{2-in-pro}).
Later, Wilson \cite{Wi} generalized  Carlitz's result on the integral of the product of  three  Bernoulli polynomials by evaluating  the  integral
\begin{equation}\label{3-in-pro}
\int_0^1\overline{B}_k(az)\overline{B}_l(bz)\overline{B}_m(cz)dz,
\end{equation}
where $\overline{B}_k(x)$ is the periodic extension of $B_k(x)$ on $[0, 1)$ and $a,b,c$
are pairwise coprime integers. Carlitz's result becomes a special case when  $a=b=c=1$. Similar integral evaluations have also been used by Espinosa and Moll \cite{EM}
during their study on Tornheim's double sums.

Recently, Wilson's result has  been generalized by  Agoh and Dilcher \cite{AD2}. In fact, they proved the following result:

\begin{proposition}[{\cite[Proposition 3]{AD2}}]
For $n,m,k\geq 0$, we have \begin{equation}\label{pr3-1}
\frac{I_{n,m,k}(x)}{n!m!k!}=\sum_{a=0}^{n+m}
(-1)^a\sum_{i=0}^a\binom
ai\frac{C_{n-a+i,m-i,k+a+1}(x)}{(n-a+i)!(m-i)!(k+a+1)!}.
\end{equation}
\end{proposition}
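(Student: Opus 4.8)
The plan is to establish \eqref{pr3-1} by repeated integration by parts, differentiating the polynomial factor $B_n(t)B_m(t)$ and antidifferentiating the factor $B_k(t)$. Writing $I_{n,m,k}(x)=\int_0^x B_n(t)B_m(t)B_k(t)\,dt$ and letting $C_{p,q,r}(x)=\bigl[B_p(t)B_q(t)B_r(t)\bigr]_0^x$ denote the increment of the triple product over $[0,x]$ (a reading forced by the fact that $I_{n,m,k}(0)=0$ must match the right-hand side at $x=0$), the two elementary facts I would use throughout are the derivative rule $B_j^{(i)}(t)=\frac{j!}{(j-i)!}B_{j-i}(t)$, together with the iterated antiderivative rule stating that the $(a+1)$-fold antiderivative of $B_k(t)$ equals $\frac{k!}{(k+a+1)!}B_{k+a+1}(t)$.

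First I would set $u(t)=B_n(t)B_m(t)$, a polynomial of degree $n+m$, and apply the tabular integration-by-parts identity
\begin{equation*}
\int_0^x u(t)B_k(t)\,dt=\sum_{a=0}^{N}(-1)^a\Bigl[u^{(a)}(t)\,\frac{k!}{(k+a+1)!}B_{k+a+1}(t)\Bigr]_0^x+(-1)^{N+1}\int_0^x u^{(N+1)}(t)\,\frac{k!}{(k+N+1)!}B_{k+N+1}(t)\,dt.
\end{equation*}
Choosing $N=n+m$ kills the remainder integral, since $u^{(n+m+1)}\equiv 0$. This single step already produces the outer sum $\sum_{a=0}^{n+m}(-1)^a$ and the weight $k!/(k+a+1)!$ visible in \eqref{pr3-1}.

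Next I would expand $u^{(a)}=(B_nB_m)^{(a)}$ by the Leibniz rule and apply the derivative rule, which after reindexing $i\mapsto a-i$ (so the lowered indices read $n-a+i$ and $m-i$ as in the statement) gives
\begin{equation*}
(B_nB_m)^{(a)}(t)=\sum_{i=0}^a\binom ai\,\frac{n!}{(n-a+i)!}\,\frac{m!}{(m-i)!}\,B_{n-a+i}(t)B_{m-i}(t).
\end{equation*}
Substituting this, recognizing each bracketed boundary difference $\bigl[B_{n-a+i}B_{m-i}B_{k+a+1}\bigr]_0^x$ as $C_{n-a+i,m-i,k+a+1}(x)$, and dividing through by $n!\,m!\,k!$ collapses the accumulated factorials into exactly the weight $\frac{1}{(n-a+i)!(m-i)!(k+a+1)!}$, which is \eqref{pr3-1}.

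I expect the only real obstacle to be bookkeeping rather than any genuine difficulty: one must carefully track the factorials produced by the iterated antiderivative and by the Leibniz rule and check that they combine into the single reciprocal-factorial weight, and one must dispose of the two degenerate points — that $\deg(B_nB_m)=n+m$ exactly, so the remainder integral is annihilated precisely at $N=n+m$, and that terms with a negative lowered index do not intrude, since $1/(n-a+i)!$ and $1/(m-i)!$ already vanish there under the usual convention for the reciprocal factorial at negative integers. No analytic issue beyond these combinatorial verifications is anticipated.
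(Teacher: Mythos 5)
Your proof is correct, but it takes a genuinely different route from the paper's. The paper obtains this proposition as the $r=3$ specialization of Theorem \ref{main thm2}: setting $k_1=n$, $k_2=m$, $k_3=k$ and writing $i_1=a-i$, $i_2=i$, the multinomial coefficient $\binom{a}{i_1,i_2}$ collapses to $\binom{a}{i}$, and \eqref{main2} becomes the stated identity upon unpacking $\widetilde I$ and $\widetilde C$; Theorem \ref{main thm2} itself rests on Proposition \ref{main thm}, proved by a single integration by parts yielding the recurrence \eqref{part-int-3}, induction on $\mu$ via the Pascal-type recurrence for multinomial coefficients (Lemma \ref{main-lem}), and finally the choice $\mu=k_1+\cdots+k_{r-1}+1$, which kills the remainder terms through the vanishing convention \eqref{I-zero}. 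You instead unroll the whole computation in one pass: tabular integration by parts with the closed-form iterated antiderivative $\frac{k!}{(k+a+1)!}B_{k+a+1}(t)$, termination by the degree argument $(B_nB_m)^{(n+m+1)}\equiv 0$, and Leibniz's rule to expand $(B_nB_m)^{(a)}$. The two mechanisms correspond exactly --- your tabular scheme is the paper's induction unrolled, Leibniz's formula does the work of the multinomial recurrence, and your degree-vanishing replaces the paper's negative-index vanishing --- but your argument is self-contained, avoids induction, and is shorter for this case, whereas the paper's organization buys the statement for arbitrary $r$ (plus the intermediate Proposition \ref{main thm}, valid for every $\mu$ with an explicit remainder), of which this proposition is a two-line consequence. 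Your reading of $C_{p,q,r}(x)$ as the increment $\left[B_p(t)B_q(t)B_r(t)\right]_0^x$ agrees with the paper's definition in \eqref{three-Eu} since $B_j(0)=B_j$, and your disposal of negative lowered indices via the reciprocal-factorial convention matches the paper's conventions \eqref{mul-zero} and \eqref{I-zero}. Note, finally, that your method generalizes verbatim to $r$ factors by replacing Leibniz's rule with its multinomial form, which would recover Theorem \ref{main thm2} in full.
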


Let $r$ be any positive integer. In this paper, we  provide   explicit
formulas for the integral of the product of $r$ Bernoulli polynomials which lets
 all the previously known results  for the integral of the products of Bernoulli polynomials become special cases (see Remark~\ref{es} below).

Before stating our main results, we introduce the definition and some basic properties of  multinomial coefficients.
Since the binomials coefficients have been used by Agoh and Dilcher in \cite{AD2} to get their formulas of the product of  three  Bernoulli polynomials, in order to get more general ones, we need some information on  multinomial coefficients. They are defined to be the number
\begin{equation}\label{mul-def}
\binom{\mu}{k_1,\ldots,k_r}=
\frac{\mu!}{k_1!\cdots k_r!},
\end{equation}
where $k_1+\cdots+k_r=\mu$ and $k_1,\ldots,k_r\geq0$ (see
{\cite[p.~142]{Com}}).

When $r=2$, we obtain the binomial coefficients
\begin{equation}\label{bin-def}
\binom{\mu}{k_1,k_2}=
\binom{\mu}{k_1,\mu-k_1}=\binom{\mu}{k_1}=\binom{\mu}{k_2}.
\end{equation}
The multinomial coefficients can be extended as follows:
\begin{equation}\label{mul-zero}
\binom{\mu}{k_1,\ldots,k_r}=
\begin{cases}
0 & \text{if $\min\limits_{1\leq i\leq r}\{k_i\}<0$ and $\max\limits_{1\leq i\leq r}\{k_i\}>\mu$} \\
\frac{\mu!}{k_1!\cdots k_r!} &\text{otherwise}.
\end{cases}
\end{equation}

\begin{lemma}[{\cite[p.~143]{Com}}]\label{main-lem}
Multinomial coefficients satisfy  the recurrence relation
$$\binom{\mu}{k_1,\ldots,k_r}=\binom{\mu-1}{k_1-1,\ldots,k_r}+\cdots+\binom{\mu-1}{k_1,\ldots,k_r-1}$$
and the symmetry property
$$\binom{\mu}{k_1,\ldots,k_r}=\binom{\mu}{k_{\sigma(1)},\ldots,k_{\sigma(r)}},$$
where $\sigma$ is any permutation of $(1,\ldots,r).$
\end{lemma}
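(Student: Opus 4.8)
The plan is to establish both assertions directly from the definition (\ref{mul-def}) together with the zero extension (\ref{mul-zero}), since neither requires anything beyond elementary manipulation of factorials. I would dispose of the symmetry property first, as it is essentially immediate: for any permutation $\sigma$ of $(1,\ldots,r)$ the constraint $k_{\sigma(1)}+\cdots+k_{\sigma(r)}=k_1+\cdots+k_r=\mu$ is preserved, and the product appearing in the denominator is reordered but not changed, so that $k_{\sigma(1)}!\cdots k_{\sigma(r)}!=k_1!\cdots k_r!$. Hence both sides of the symmetry identity equal $\mu!/(k_1!\cdots k_r!)$.

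For the recurrence relation I would compute the right-hand side directly. The $j$-th summand is $\binom{\mu-1}{k_1,\ldots,k_j-1,\ldots,k_r}$, in which only the $j$-th entry is decreased by one; when all entries remain nonnegative this equals $(\mu-1)!/(k_1!\cdots(k_j-1)!\cdots k_r!)$. The key step is the identity $(k_j-1)!=k_j!/k_j$, which lets me rewrite this summand as $\dfrac{(\mu-1)!\,k_j}{k_1!\cdots k_r!}$. Summing over $j=1,\ldots,r$ and factoring out the common quantity $(\mu-1)!/(k_1!\cdots k_r!)$, I obtain
$$\sum_{j=1}^{r}\binom{\mu-1}{k_1,\ldots,k_j-1,\ldots,k_r}=\frac{(\mu-1)!}{k_1!\cdots k_r!}\sum_{j=1}^{r}k_j=\frac{(\mu-1)!}{k_1!\cdots k_r!}\cdot\mu=\frac{\mu!}{k_1!\cdots k_r!},$$
where I have used the constraint $\sum_{j=1}^{r}k_j=\mu$. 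The last expression is exactly $\binom{\mu}{k_1,\ldots,k_r}$, completing the computation.

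The only point requiring care, and the closest thing to an obstacle, is the treatment of the boundary cases in which some $k_j=0$. For such an index the decremented entry $k_j-1=-1$ is negative, so the corresponding term on the right-hand side vanishes by the extension (\ref{mul-zero}); reassuringly, this is consistent with the algebra above, where the factor $k_j=0$ forces that summand to contribute nothing. Thus the displayed computation remains valid with no separate case analysis, provided one interprets each summand through (\ref{mul-zero}). I would remark that an alternative derivation of the recurrence is available by comparing the coefficients of $x_1^{k_1}\cdots x_r^{k_r}$ on both sides of the factorization $(x_1+\cdots+x_r)^{\mu}=(x_1+\cdots+x_r)(x_1+\cdots+x_r)^{\mu-1}$ in the multinomial theorem, but the direct factorial argument above is the shortest route and keeps the boundary conventions transparent.
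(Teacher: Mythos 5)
Your proposal is correct. One thing to be aware of: the paper does not prove this lemma at all --- it is quoted verbatim from the textbook reference \cite[p.~143]{Com} and used as a black box --- so there is no internal argument to compare yours against; what you have written is the standard proof that the cited source would give. Both halves of your computation are sound: the symmetry is indeed immediate from commutativity of the factorial product in (\ref{mul-def}), and the recurrence follows from the identity $(k_j-1)!=k_j!/k_j$ together with $\sum_j k_j=\mu$, exactly as you carry it out. Your handling of the boundary terms is the one point of genuine substance, and you treat it correctly: when $k_j=0$ the $j$-th summand must be read as $0$, which is consistent both with your algebraic factor $k_j=0$ and with how the paper actually uses the extended coefficients later (e.g.\ in (\ref{m-bi-0}), where $\binom{\mu}{i_1'-1,\ldots,i_{r-1}}=0$ whenever $i_1'=0$). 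Note in passing that the paper's statement of the extension (\ref{mul-zero}) reads ``$\min\{k_i\}<0$ \emph{and} $\max\{k_i\}>\mu$,'' which taken literally would leave your boundary terms undefined rather than zero; the intended meaning is clearly ``or,'' and your interpretation is the one the rest of the paper relies on. Your alternative route via the factorization $(x_1+\cdots+x_r)^{\mu}=(x_1+\cdots+x_r)(x_1+\cdots+x_r)^{\mu-1}$ and the multinomial theorem is also valid, and has the minor advantage of sidestepping the negative-entry convention entirely, since vanishing coefficients are automatic there; but the direct factorial argument you chose is shorter and makes the convention explicit, which is preferable given how Proposition \ref{main thm}'s proof leans on it.
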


 For any positive integer $m$ and any nonnegative integer
$n,$ using the multinomial coefficients, we may write down the
following multinomial formula which tells us how a sum with $m$
terms expands when raised to an arbitrary power $n$.

\begin{theorem}[{\cite[p.~144]{Com}}] For any $n\geq 0$, $$(x_1 + x_2 + \cdots + x_m)^n =
\sum {n \choose k_1, k_2, \ldots, k_m} \prod_{1\le t\le
m}x_{t}^{k_{t}},$$ where the sum is over all $m$-lists  $(k_1,
k_2, \ldots, k_m)$ of nonnegative integers that sum to $n$.
\end{theorem}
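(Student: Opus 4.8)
The plan is to induct on the exponent $n$, using the recurrence relation for multinomial coefficients recorded in Lemma~\ref{main-lem}. The base case $n=0$ is immediate: the left-hand side equals $1$, while on the right the only admissible list is $(0,\ldots,0)$, whose multinomial coefficient is $\binom{0}{0,\ldots,0}=1$ and whose associated monomial is the empty product $\prod_{t}x_t^0=1$. Conceptually, the identity also has a transparent combinatorial reading that underlies the whole argument: expanding the product of the $n$ identical factors $(x_1+\cdots+x_m)$ and choosing one summand from each factor, the monomial $x_1^{k_1}\cdots x_m^{k_m}$ is produced exactly as many times as there are ways to decide which $k_1$ of the $n$ factors contribute $x_1$, which $k_2$ contribute $x_2$, and so on, and that count is precisely $\binom{n}{k_1,\ldots,k_m}$.

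For the inductive step I would write $(x_1+\cdots+x_m)^n=(x_1+\cdots+x_m)(x_1+\cdots+x_m)^{n-1}$ and substitute the induction hypothesis for the factor of degree $n-1$. Multiplying the resulting sum by $\sum_{s=1}^m x_s$ and distributing, each term $\binom{n-1}{k_1,\ldots,k_m}\prod_t x_t^{k_t}$ times $x_s$ contributes a monomial whose exponent vector is $(k_1,\ldots,k_m)$ with its $s$-th entry raised by one. Collecting the coefficient of a fixed monomial $\prod_t x_t^{j_t}$ with $j_1+\cdots+j_m=n$, I would reindex $k_s=j_s-1$ (and $k_t=j_t$ for $t\neq s$) to obtain $\sum_{s=1}^m\binom{n-1}{j_1,\ldots,j_s-1,\ldots,j_m}$, which by the recurrence of Lemma~\ref{main-lem} equals $\binom{n}{j_1,\ldots,j_m}$. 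This is exactly the claimed coefficient, completing the induction.

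The main obstacle is purely a matter of careful bookkeeping at the boundary rather than any substantial difficulty. When some $j_s=0$ the formal reindexing produces a term $\binom{n-1}{j_1,\ldots,-1,\ldots,j_m}$ with a negative entry; one must invoke the extended convention~\eqref{mul-zero} that sets such coefficients to $0$, so that the recurrence of Lemma~\ref{main-lem} remains valid in the stated form with no spurious contributions. Keeping track of this convention uniformly across all $m$ terms of the recurrence, and verifying that every monomial of total degree $n$ is accounted for exactly once, is the only place where care is needed; the algebra itself is routine. An entirely parallel argument by induction on the number of variables $m$, using the ordinary binomial theorem as the base case $m=2$ together with the factorization $\binom{n}{j}\binom{n-j}{k_1,\ldots,k_{m-1}}=\binom{n}{k_1,\ldots,k_{m-1},j}$, would give an alternative route to the same conclusion.
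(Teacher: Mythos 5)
Your proof is correct, but there is nothing in the paper to compare it against: the paper states the multinomial theorem as an imported result, cited from Mazur's textbook \cite{Com}, and gives no proof of its own. Your induction on $n$ (base case $n=0$, then multiplying the degree-$(n-1)$ expansion by $x_1+\cdots+x_m$ and collecting the coefficient of each monomial $\prod_t x_t^{j_t}$ via the recurrence of Lemma~\ref{main-lem}) is the standard argument, and it is carried out correctly; it also fits naturally with the paper's toolkit, since that recurrence is exactly what the paper records. One caution about the boundary bookkeeping you rightly flag: the extension \eqref{mul-zero} as printed sets the coefficient to zero only when $\min_i\{k_i\}<0$ \emph{and} $\max_i\{k_i\}>\mu$, which read literally fails to cover cases such as $\binom{3}{-1,2,2}$ (negative entry, but maximum not exceeding $\mu$); this is evidently a misprint for ``or,'' and your argument uses the convention in that intended sense. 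If you prefer not to lean on the convention at all, you can simply restrict the sum in the inductive step to those $s$ with $j_s\geq 1$, which are precisely the terms that actually arise when the product is distributed, and note that the recurrence of Lemma~\ref{main-lem} then holds with the terms having a zero entry omitted. Your two side remarks --- the direct combinatorial counting proof and the alternative induction on $m$ via the binomial theorem and the factorization $\binom{n}{j}\binom{n-j}{k_1,\ldots,k_{m-1}}=\binom{n}{k_1,\ldots,k_{m-1},j}$ --- are also valid routes to the same statement.
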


\begin{proposition}\label{main thm}
Let
\begin{equation}\label{three-Eu}
\begin{aligned}
&I_{k_1,\ldots,k_r}(x)=\int_{0}^x B_{k_1}(z)\cdots B_{k_r}(z)dz, \\
&C_{k_1,\ldots,k_r}(x)=B_{k_1}(x)\cdots B_{k_r}(x)-B_{k_1}\cdots B_{k_r},\\
&\widetilde{I}_{k_1,\ldots,k_r}(x)=\frac1{k_1!\cdots k_r!}I_{k_1,\ldots,k_r}(x), \\
&\widetilde{C}_{k_1,\ldots,k_r}(x)=\frac1{k_1!\cdots
k_r!}C_{k_1,\ldots,k_r}(x).
\end{aligned}
\end{equation}
For any $\mu\geq1$ and $k_1,\ldots,k_r\geq0,$ we have
\begin{equation}\label{main}
\begin{aligned}
\widetilde{I}_{k_1,\ldots,k_r}(x) &=\sum_{a=0}^{\mu-1}(-1)^a
\sum_{i_1+\cdots+i_{r-1}=a}\binom a{i_1,\ldots,i_{r-1}} \\
&\qquad\times
\widetilde{C}_{k_1-i_1,\ldots,k_{r-1}-i_{r-1},k_r+a+1}(x) \\
&\quad
+(-1)^\mu
\sum_{i_1+\cdots+i_{r-1}=\mu}\binom \mu{i_1,\ldots,i_{r-1}} \\
&\qquad\times
\widetilde{I}_{k_1-i_1,\ldots,k_{r-1}-i_{r-1},k_r+\mu}(x).
\end{aligned}
\end{equation}
\end{proposition}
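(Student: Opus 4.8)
The plan is to prove \eqref{main} by induction on $\mu$, the entire argument resting on the two elementary facts $B_k'(z)=kB_{k-1}(z)$ and $B_k(0)=B_k$ together with a single integration by parts. The crucial building block, which I would establish first, is the one-step recurrence
\begin{equation}\label{onestep}
\widetilde{I}_{k_1,\ldots,k_r}(x)
=\widetilde{C}_{k_1,\ldots,k_{r-1},k_r+1}(x)
-\sum_{j=1}^{r-1}\widetilde{I}_{k_1,\ldots,k_j-1,\ldots,k_{r-1},k_r+1}(x),
\end{equation}
which is precisely the $\mu=1$ instance of \eqref{main}, since $\binom{0}{0,\ldots,0}=1$ and the multi-indices with $i_1+\cdots+i_{r-1}=1$ are exactly the unit vectors.

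To obtain \eqref{onestep}, I would integrate by parts in $I_{k_1,\ldots,k_r}(x)=\int_0^x\bigl(\prod_{l=1}^{r-1}B_{k_l}(z)\bigr)B_{k_r}(z)\,dz$, taking $B_{k_r}(z)\,dz$ as the factor to integrate, so that $v=\frac{1}{k_r+1}B_{k_r+1}(z)$. The boundary term equals $\frac{1}{k_r+1}\bigl(\prod_{l=1}^{r-1}B_{k_l}(x)\cdot B_{k_r+1}(x)-\prod_{l=1}^{r-1}B_{k_l}\cdot B_{k_r+1}\bigr)=\frac{1}{k_r+1}C_{k_1,\ldots,k_{r-1},k_r+1}(x)$, the subtraction of the value at $z=0$ being exactly what turns the product of polynomials into a $C$. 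Differentiating the remaining product via $B_{k_l}'=k_lB_{k_l-1}$ contributes $-\frac{1}{k_r+1}\sum_{j=1}^{r-1}k_j\,I_{k_1,\ldots,k_j-1,\ldots,k_{r-1},k_r+1}(x)$. Dividing through by $k_1!\cdots k_r!$ and absorbing the factor $k_j/(k_r+1)$ into the factorials (using $k_j\,(k_j-1)!=k_j!$ and $(k_r+1)!=(k_r+1)k_r!$) then yields \eqref{onestep}; here the prefactor $k_j$, equivalently the convention $1/(-1)!=0$, kills any term in which an index would drop below zero, in agreement with the vanishing convention \eqref{mul-zero} for multinomial coefficients.

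For the inductive step, assuming \eqref{main} for some $\mu\geq1$, I would apply \eqref{onestep} to each term $\widetilde{I}_{k_1-i_1,\ldots,k_{r-1}-i_{r-1},k_r+\mu}(x)$ in the final sum. Each such application splits into one $\widetilde{C}$-term with last index raised to $k_r+\mu+1$ and a sum of $r-1$ new $\widetilde{I}$-terms. The $\widetilde{C}$-terms, carrying the coefficient $(-1)^\mu\binom{\mu}{i_1,\ldots,i_{r-1}}$, assemble into exactly the $a=\mu$ slice of the $\widetilde{C}$-sum in the $\mu+1$ formula. The new $\widetilde{I}$-terms, after re-indexing $i_j\mapsto i_j+1$ in the $j$th slot, must be regrouped by their multi-index $(i_1',\ldots,i_{r-1}')$ with $i_1'+\cdots+i_{r-1}'=\mu+1$; collecting the coefficients amounts to the identity $\sum_{j=1}^{r-1}\binom{\mu}{i_1',\ldots,i_j'-1,\ldots,i_{r-1}'}=\binom{\mu+1}{i_1',\ldots,i_{r-1}'}$, which is the recurrence of Lemma~\ref{main-lem} (with \eqref{mul-zero} automatically discarding those $j$ for which $i_j'=0$). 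This produces precisely the $\widetilde{I}$-sum of the $\mu+1$ formula, closing the induction.

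I expect the main obstacle to be purely bookkeeping: correctly re-indexing the doubly-indexed family (by the slot $j$ and by the multi-index $(i_1',\ldots,i_{r-1}')$) so that the recurrence of Lemma~\ref{main-lem} applies after interchanging the two summations, and verifying that the negative-index conventions on $\widetilde{C}$, on $\widetilde{I}$, and on the multinomial coefficients are mutually consistent at the boundary. The analytic content — a single integration by parts — is completely routine; all of the genuine work is combinatorial.
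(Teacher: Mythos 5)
Your proposal is correct and follows essentially the same route as the paper's own proof: one integration by parts yields the one-step recurrence (the paper's equation \eqref{part-int-3}), and the induction on $\mu$ is closed exactly as in the paper by shifting indices and collecting coefficients via the multinomial recurrence of Lemma~\ref{main-lem}, with the vanishing conventions handling the boundary cases. No gaps.
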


Letting
\begin{equation}\label{I-zero}
\widetilde{I}_{k_1,\ldots,k_r}(x)=0\quad\text{when
}\min\limits_{1\leq i\leq r}\{k_i\}<0
\end{equation}
and
$$\mu=k_1+\cdots+k_{r-1}+1$$
in Proposition \ref{main thm}, we obtain the following main result
of this paper.

\begin{theorem}\label{main thm2}
For any $k_1,\ldots,k_r\geq0,$ we have
\begin{equation}\label{main2}\begin{aligned} \widetilde{I}_{k_1,\ldots,k_r}(x)
&=\sum_{a=0}^{k_1+\cdots+k_{r-1}}(-1)^a
\sum_{i_1+\cdots+i_{r-1}=a}\binom a{i_1,\ldots,i_{r-1}}
\widetilde{C}_{k_1-i_1,\ldots,k_{r-1}-i_{r-1},k_r+a+1}(x).
\end{aligned}\end{equation}
\end{theorem}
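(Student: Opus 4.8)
The plan is to obtain Theorem \ref{main thm2} as a direct specialization of Proposition \ref{main thm}, so that the only genuine work is showing that the second (unexpanded) term in (\ref{main}) disappears for the prescribed value of $\mu$. First I would substitute $\mu=k_1+\cdots+k_{r-1}+1$ into (\ref{main}). Since each $k_j\geq0$ this gives $\mu\geq1$, so the hypothesis of Proposition \ref{main thm} is satisfied. With this choice the upper summation index $\mu-1$ of the first term becomes precisely $k_1+\cdots+k_{r-1}$, and the first term of (\ref{main}) is then word-for-word the right-hand side of (\ref{main2}). It therefore remains only to check that the remaining term is identically zero.

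The heart of the argument is that the final term of (\ref{main}), namely
\[
(-1)^\mu \sum_{i_1+\cdots+i_{r-1}=\mu}\binom{\mu}{i_1,\ldots,i_{r-1}} \widetilde{I}_{k_1-i_1,\ldots,k_{r-1}-i_{r-1},k_r+\mu}(x),
\]
vanishes. I would argue termwise. The outer sum runs over all $(r-1)$-tuples of nonnegative integers with $i_1+\cdots+i_{r-1}=\mu=k_1+\cdots+k_{r-1}+1$. If one had $i_j\leq k_j$ for every $j\in\{1,\ldots,r-1\}$, then adding these inequalities would give $i_1+\cdots+i_{r-1}\leq k_1+\cdots+k_{r-1}<\mu$, contradicting the constraint on the tuple. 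Hence in every tuple occurring in the sum there is at least one index $j$ with $i_j>k_j$, that is, $k_j-i_j<0$. The last subscript $k_r+\mu$ is automatically nonnegative and so plays no role here; it is one of the first $r-1$ subscripts that becomes negative. By the vanishing convention (\ref{I-zero}) the factor $\widetilde{I}_{k_1-i_1,\ldots,k_{r-1}-i_{r-1},k_r+\mu}(x)$ is therefore $0$ for each summand, and the whole term collapses to zero. Consequently (\ref{main}) reduces to (\ref{main2}).

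The only point demanding real care—and what I regard as the main, if modest, obstacle—is making the counting step uniform across all $r-1$ indices simultaneously, and confirming that convention (\ref{I-zero}) is exactly strong enough to annihilate each surviving summand. Everything else is a mechanical substitution of $\mu$, so no further computation is required.
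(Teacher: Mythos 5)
Your proposal is correct and takes essentially the same route as the paper: Section 5 of the paper likewise substitutes $\mu=k_1+\cdots+k_{r-1}+1$ into Proposition \ref{main thm} and shows by the same counting argument (if every $k_j-i_j\geq0$ then $i_1+\cdots+i_{r-1}\leq k_1+\cdots+k_{r-1}<\mu$, a contradiction) that some $k_j-i_j$ is negative, so each summand of the residual term vanishes by convention (\ref{I-zero}).
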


\begin{remark}\label{es}
In Theorem \ref{main thm2}, when $r=2$ and $x=1$, we obtain the results of  N\"orlund \cite{No} and Mordell \cite{Mo},
when $r=3$ and $x=1$, we obtain the result of Carlitz \cite{Ca}, when $r=3$, we obtain the result of
Agoh and Dilcher \cite{AD2}.
\end{remark}

 From Theorem \ref{main thm2} and the invariance by permutation
of $r$-lists $(k_1,\ldots,k_r)$ in the integral
$\widetilde{I}_{k_1,\ldots,k_r}(x),$ we have the following result.

\begin{corollary}
Let $T_{k_1,\ldots,k_r}(x)$ be the right-hand side of
(\ref{main2}), and let $\sigma\in S_r,$ where $S_r$ is the
symmetric group of degree $r.$ Then for all $k_1,\ldots,k_r\geq0,$
$$T_{k_1,\ldots,k_r}(x)=T_{\sigma(k_1),\ldots,\sigma(k_r)}(x).$$
\end{corollary}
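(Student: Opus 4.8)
The plan is to reduce the asserted symmetry of $T_{k_1,\ldots,k_r}(x)$ to the transparent symmetry of the normalized integral $\widetilde{I}_{k_1,\ldots,k_r}(x)$. The crucial observation is that the expression defining $T_{k_1,\ldots,k_r}(x)$ is \emph{exactly} the right-hand side of (\ref{main2}), which by Theorem \ref{main thm2} equals $\widetilde{I}_{k_1,\ldots,k_r}(x)$ for all $k_1,\ldots,k_r\geq0$. So I would not attempt to manipulate the asymmetric-looking summand in (\ref{main2}) directly; instead I would pass through this identity in both directions. Throughout I read $T_{\sigma(k_1),\ldots,\sigma(k_r)}(x)$ as the result of permuting the argument list $(k_1,\ldots,k_r)$ by $\sigma$, that is, as $T_{k_{\sigma(1)},\ldots,k_{\sigma(r)}}(x)$.

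First I would record, from the definitions in (\ref{three-Eu}), that
$$\widetilde{I}_{k_1,\ldots,k_r}(x)=\frac{1}{k_1!\cdots k_r!}\int_{0}^{x}B_{k_1}(z)\cdots B_{k_r}(z)\,dz,$$
and observe that this quantity is visibly invariant under any $\sigma\in S_r$: permuting the indices only reorders the commuting factors $B_{k_j}(z)$ in the integrand and the commuting factors $k_j!$ in the denominator, leaving the value of the integral unchanged. Since the permuted list $(k_{\sigma(1)},\ldots,k_{\sigma(r)})$ again has all entries $\geq 0$, Theorem \ref{main thm2} applies to it as well, and chaining the resulting equalities gives
$$T_{k_1,\ldots,k_r}(x)=\widetilde{I}_{k_1,\ldots,k_r}(x)=\widetilde{I}_{k_{\sigma(1)},\ldots,k_{\sigma(r)}}(x)=T_{k_{\sigma(1)},\ldots,k_{\sigma(r)}}(x),$$
which is precisely the assertion.

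I do not expect any genuine analytic or combinatorial obstacle here; the entire content has already been absorbed into Theorem \ref{main thm2}. The only point worth stressing is that the symmetry is \emph{not} apparent from the formula (\ref{main2}) itself, where the last index $k_r$ is singled out (it occurs only in the shifted slot $k_r+a+1$, while the summation indices $i_1,\ldots,i_{r-1}$ touch only the first $r-1$ arguments). Consequently the corollary cannot be read off from the summand, and its real interest lies in the converse reading: equating $T_{k_1,\ldots,k_r}(x)$ with each of its permutations $T_{k_{\sigma(1)},\ldots,k_{\sigma(r)}}(x)$ produces a family of nontrivial identities relating the multinomial coefficients and the quantities $\widetilde{C}$. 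The stated equality, however, follows immediately once Theorem \ref{main thm2} is in hand.
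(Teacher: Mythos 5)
Your proof is correct and is essentially the paper's own argument: the paper derives this corollary precisely by combining Theorem \ref{main thm2} with the evident permutation invariance of $\widetilde{I}_{k_1,\ldots,k_r}(x)$, exactly as in your chain $T_{k_1,\ldots,k_r}(x)=\widetilde{I}_{k_1,\ldots,k_r}(x)=\widetilde{I}_{k_{\sigma(1)},\ldots,k_{\sigma(r)}}(x)=T_{k_{\sigma(1)},\ldots,k_{\sigma(r)}}(x)$. Your reading of the notation $T_{\sigma(k_1),\ldots,\sigma(k_r)}(x)$ as $T_{k_{\sigma(1)},\ldots,k_{\sigma(r)}}(x)$ is also the intended one.
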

\begin{remark}This is a generalization of \cite[Corollary 2]{AD2}.
\end{remark}

The following parts of this paper are organized as follows.

In section 2, we recall the definitions and some properties for
the Bernoulli polynomials, in particular  well-known facts for the
integral of  one  Bernoulli polynomial. They will be used  in the
proof of the main result of this paper. In section 3, by using our
formulas, we derive several examples including some formulas by
previous authors. In sections 4 and 5, we shall prove Proposition
\ref{main thm} and Theorem \ref{main thm2} respectively.

\section{Bernoulli polynomials}
 The Bernoulli polynomials $B_k(x)$ are defined by the exponential generating function
\begin{equation}\label{Eu-pol}
\frac{te^{xt}}{e^t-1}=\sum_{k=0}^\infty B_k(x)\frac{t^k}{k!}.
\end{equation}
They satisfy the following well-known identities:
\begin{equation}\label{intB-1}
B_k(x+1)-B_{k}(x)=kx^{k-1},\;k\geq1,
\end{equation}
\begin{equation}\label{Eu-pol-1}
\frac{\text{d}}{\text{d}x}B_k(x)=kB_{k-1}(x),\;k\geq1,
\end{equation}
\begin{equation}\label{Eu-pol-2}
\int_x^y B_k(z)dz=\frac{B_{k+1}(y)-B_{k+1}(x)}{k+1},\;k\geq0,
\end{equation}
\begin{equation}\label{Eu-pol-3}
B_k(1-x)=(-1)^kB_{k}(x),\;k\geq0.
\end{equation}
Let $B_k=B_k(0)$ be the Bernoulli numbers. From the above definition of $B_k(x)$,  we conclude that
all the $B_k$ are rational numbers. It is well-known that $B_{2k+1}=0$ for $k\geq1,$ and $B_k$ have alternating signs for even $k.$
Letting $x=0$ in (\ref{Eu-pol-3}), we get the following special value of the Bernoulli
polynomials:
\begin{equation}\label{Eu-p-sp}
B_k(1)=\begin{cases}B_k &\text{for }k\neq1 \\ -B_1 &\text{for }k=1\end{cases}
\end{equation}
(see \cite{AD2}).

\section{Some further consequences}
In this section, we show that Theorem~\ref{main thm2} implies
several results by previous authors.
\begin{proposition}[{\cite[Proposition 1]{AD2}}]
For $k,m\geq 0$ we have
\begin{equation}\label{pr1}\begin{aligned}
\int_{0}^xB_{k}(z)B_{m}(z)dz&=\frac{k!m!}{(k+m+1)!}\sum_{j=0}^{k}(-1)^j
\binom{k+m+1}{k-j} \\
&\quad\times(B_{k-j}(x)B_{m+j+1}(x)-B_{k-j}B_{m+j+1}).
\end{aligned}\end{equation}
\end{proposition}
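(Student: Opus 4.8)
The plan is to derive this $r=2$ case directly from Theorem~\ref{main thm2}, which is the natural strategy since Remark~\ref{es} explicitly asserts that the two-polynomial formula of N\"orlund and Mordell is the specialization $r=2$. First I would set $r=2$, $k_1=k$, and $k_2=m$ in (\ref{main2}). Since there is only one summation index $i_1$ running with the single constraint $i_1=a$, the inner multinomial sum $\sum_{i_1=a}\binom{a}{i_1}$ collapses to the single term with $i_1=a$, giving $\binom{a}{a}=1$. Thus (\ref{main2}) reduces to
\begin{equation*}
\widetilde{I}_{k,m}(x)=\sum_{a=0}^{k}(-1)^a\,\widetilde{C}_{k-a,\,m+a+1}(x).
\end{equation*}

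Next I would unwind the tilde-normalizations using the definitions in (\ref{three-Eu}). On the left, $\widetilde{I}_{k,m}(x)=\frac{1}{k!\,m!}\int_0^x B_k(z)B_m(z)\,dz$, and on the right, $\widetilde{C}_{k-a,\,m+a+1}(x)=\frac{1}{(k-a)!\,(m+a+1)!}\bigl(B_{k-a}(x)B_{m+a+1}(x)-B_{k-a}B_{m+a+1}\bigr)$. Multiplying both sides by $k!\,m!$ converts the left side into the bare integral $\int_0^x B_k(z)B_m(z)\,dz$ and produces, on the right, the factor $\frac{k!\,m!}{(k-a)!\,(m+a+1)!}$ in front of each summand. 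The main bookkeeping step is to rewrite this factor so as to match the binomial coefficient $\binom{k+m+1}{k-a}$ appearing in (\ref{pr1}).

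The key algebraic identity to verify is
\begin{equation*}
\frac{k!\,m!}{(k-a)!\,(m+a+1)!}=\frac{k!\,m!}{(k+m+1)!}\cdot\frac{(k+m+1)!}{(k-a)!\,(m+a+1)!}=\frac{k!\,m!}{(k+m+1)!}\binom{k+m+1}{k-a},
\end{equation*}
where the last equality uses $(k-a)+(m+a+1)=k+m+1$, so that $\binom{k+m+1}{k-a}=\frac{(k+m+1)!}{(k-a)!\,(m+a+1)!}$. Substituting this, pulling the common constant $\frac{k!\,m!}{(k+m+1)!}$ outside the sum, and relabeling the summation index $a$ as $j$ yields exactly the right-hand side of (\ref{pr1}), with the summand $\binom{k+m+1}{k-j}\bigl(B_{k-j}(x)B_{m+j+1}(x)-B_{k-j}B_{m+j+1}\bigr)$.

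I do not anticipate a serious obstacle here, since the result is a clean specialization; the only point requiring care is confirming that the inner multinomial sum genuinely collapses to a single term when $r=2$ (this follows from (\ref{bin-def}) and the constraint $i_1=a$), and that the convention (\ref{mul-zero}) together with (\ref{I-zero}) correctly truncates the upper limit of summation at $a=k$, consistent with the $\sum_{j=0}^{k}$ appearing in (\ref{pr1}). The factorial-to-binomial rearrangement above is the crux of the matching, and it is purely routine once the index constraint is in place.
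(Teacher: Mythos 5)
Your proposal is correct and follows essentially the same route as the paper: specializing Theorem~\ref{main thm2} to $r=2$ (where the inner multinomial sum collapses to a single term), unwinding the normalizations in (\ref{three-Eu}), and converting $\frac{(k+m+1)!}{(k-j)!(m+j+1)!}$ into $\binom{k+m+1}{k-j}$ via (\ref{mul-def}) and (\ref{bin-def}). The only cosmetic difference is that you make the collapse of the sum $\sum_{i_1=a}\binom{a}{i_1}$ explicit, which the paper leaves implicit.
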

\begin{proof}By (\ref{three-Eu}), we have
\begin{equation}\label{pr1-1}
 I_{k,m}(x)=\int_{0}^x B_{k}(z)B_{m}(z)dz\end{equation} and
\begin{equation}\label{pr1-2}\widetilde{I}_{k,m}(x)=\frac1{k!m!}I_{k,m}(x).\end{equation}
 Substituting (\ref{pr1-1}) into (\ref{pr1-2}), we have
\begin{equation}\label{pr1-3}\widetilde{I}_{k,m}(x)=\frac1{k!m!}\int_{0}^x
B_{k}(z)B_{m}(z)dz.
\end{equation}
By (\ref{three-Eu}), we  have
\begin{equation}\label{pr1-4}
 C_{k,m}(x)=B_{k}(x)B_{m}(x)-B_{k}B_{m}\end{equation} and
\begin{equation}\label{pr1-5}\widetilde{C}_{k,m}(x)=\frac1{k!m!}C_{k,m}(x).\end{equation}
Substituting (\ref{pr1-4}) into (\ref{pr1-5}), we have
\begin{equation}\label{pr1-6}\widetilde{C}_{k,m}(x)=\frac1{k!m!}(B_{k}(x)B_{m}(x)-B_{k}B_{m}).
\end{equation}
By (\ref{main2}), we have
\begin{equation}\label{pr1-7}
 \widetilde{I}_{k,m}(x)=\sum_{j=0}^{k}(-1)^j
\widetilde{C}_{k-j,m+j+1}(x).\end{equation} Substituting
(\ref{pr1-3}) and (\ref{pr1-6}) into (\ref{pr1-7}), we have
\begin{equation}\begin{aligned}\label{pr1-8}&\quad\frac1{k!m!}\int_{0}^x B_{k}(z)B_{m}(z)dz\\&=\sum_{j=0}^{k}(-1)^j
\frac1{(k-j)!(m+j+1)!}(B_{k-j}(x)B_{m+j+1}(x)-B_{k-j}B_{m+j+1}).
\end{aligned}
\end{equation}
Thus
\begin{equation}\begin{aligned}\label{pr1-9}&\quad\int_{0}^x
B_{k}(z)B_{m}(z)dz\\&=k!m!\sum_{j=0}^{k}(-1)^j
\frac1{(k-j)!(m+j+1)!}(B_{k-j}(x)B_{m+j+1}(x)-B_{k-j}B_{m+j+1})
\\&=\frac{k!m!}{(k+m+1)!}\sum_{j=0}^{k}(-1)^j
\frac{(k+m+1)!}{(k-j)!(m+j+1)!}(B_{k-j}(x)B_{m+j+1}(x)-B_{k-j}B_{m+j+1}).
\end{aligned}\end{equation}
By (\ref{mul-def}) and (\ref{bin-def}), we have
\begin{equation}\label{pr1-10}
\frac{(k+m+1)!}{(k-j)!(m+j+1)!}=\binom{k+m+1}{k-j,m+j+1}=\binom{k+m+1}{k-j}.
\end{equation}
Substituting (\ref{pr1-10}) into  (\ref{pr1-9}), we obtain
(\ref{pr1}).
\end{proof}

\begin{proposition}[{\cite[Proposition 3]{AD2}}]
For $n,m,k\geq 0$ we have
\begin{equation}\label{pr3}
\frac{I_{n,m,k}(x)}{n!m!k!}=\sum_{a=0}^{n+m}
(-1)^a\sum_{i=0}^a\binom
ai\frac{C_{n-a+i,m-i,k+a+1}(x)}{(n-a+i)!(m-i)!(k+a+1)!}.
\end{equation}
\end{proposition}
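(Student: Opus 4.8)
The plan is to obtain this formula as the $r=3$ instance of Theorem~\ref{main thm2}, following the same bookkeeping that produced~(\ref{pr1}) in the two-fold case. First I would specialize~(\ref{main2}) by taking $r=3$ and $(k_1,k_2,k_3)=(n,m,k)$. The outer index $a$ then ranges over $0\le a\le k_1+k_2=n+m$, while the inner sum runs over all nonnegative pairs $(i_1,i_2)$ with $i_1+i_2=a$, weighted by $\binom{a}{i_1,i_2}$. This yields
\begin{equation*}
\widetilde{I}_{n,m,k}(x)=\sum_{a=0}^{n+m}(-1)^a\sum_{i_1+i_2=a}\binom{a}{i_1,i_2}\,\widetilde{C}_{n-i_1,\,m-i_2,\,k+a+1}(x).
\end{equation*}

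Next I would compress the two-index inner sum to a single index. Writing $i=i_2$ forces $i_1=a-i$, so $i$ sweeps $0$ through $a$; by~(\ref{bin-def}) the two-part multinomial coefficient collapses to the ordinary binomial coefficient $\binom{a}{i_1,i_2}=\binom{a}{i}$, and the shifted subscripts become $n-i_1=n-a+i$ and $m-i_2=m-i$. It then remains to clear the tilde-normalizations using their definitions in~(\ref{three-Eu}): on the left $\widetilde{I}_{n,m,k}(x)=I_{n,m,k}(x)/(n!\,m!\,k!)$, and on the right $\widetilde{C}_{n-a+i,\,m-i,\,k+a+1}(x)=C_{n-a+i,\,m-i,\,k+a+1}(x)/\bigl((n-a+i)!\,(m-i)!\,(k+a+1)!\bigr)$. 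Substituting these reproduces~(\ref{pr3}) verbatim.

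Since the derivation is a pure relabeling of Theorem~\ref{main thm2}, I do not expect a substantive obstacle; the only point demanding care is the treatment of out-of-range subscripts. For $a>n$ and $i<a-n$ the first index $n-a+i$ is negative, and such terms have to disappear. They do, by the convention~(\ref{I-zero}) and its evident analogue for $\widetilde{C}$ --- equivalently, because the reciprocal factorial $1/(n-a+i)!$ appearing in~(\ref{pr3}) vanishes at negative integers. Hence the summation limits in~(\ref{pr3}) may be kept exactly as stated, with no need to prune the negative-index terms by hand.
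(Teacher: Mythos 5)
Your proposal is correct and follows essentially the same route as the paper: specialize Theorem~\ref{main thm2} to $r=3$ with $(k_1,k_2,k_3)=(n,m,k)$, collapse the two-part multinomial coefficient to $\binom{a}{i}$ via $i=i_2$, $i_1=a-i$, and then clear the tilde-normalizations using~(\ref{three-Eu}). Your extra remark on why the negative-index terms (for $a>n$, $i<a-n$) vanish is a point the paper's own proof passes over silently, but it does not change the argument.
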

\begin{proof}By (\ref{three-Eu}), we have
\begin{equation}\label{pr3-1}\widetilde{I}_{n,m,k}(x)=\frac1{n!m!k!}I_{n,m,k}(x)\end{equation}
 and
\begin{equation}\label{pr3-2}\widetilde{C}_{n,m,k}(x)=\frac1{n!m!k!}C_{n,m,k}(x).\end{equation}
By (\ref{main2}), we have
\begin{equation}\label{pr3-3}
 \widetilde{I}_{n,m,k}(x)=\sum_{a=0}^{n+m} (-1)^a\sum_{i=0}^a\binom ai
\widetilde{C}_{n-a+i,m-i,k+a+1}(x).\end{equation} Substituting
(\ref{pr3-1}) and (\ref{pr3-2}) into (\ref{pr3-3}),  we obtain
(\ref{pr1}).
\end{proof}

Letting $r=4$ in Theorem \ref{main thm2}, we have:

\begin{proposition}\label{pro-4}
$$\begin{aligned}
\widetilde{I}_{k_1,k_2,k_3,k_4}(x)
=\sum_{a=0}^{k_1+k_{2}+k_3}(-1)^a \sum_{i_1+i_{2}+i_3=a}\binom
a{i_1,i_{2},i_3}
\widetilde{C}_{k_1-i_1,k_{2}-i_{2},k_3-i_3,k_4+a+1}(x).
\end{aligned}$$
\end{proposition}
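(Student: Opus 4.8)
The plan is to obtain Proposition~\ref{pro-4} as the immediate specialization of Theorem~\ref{main thm2} to the case $r=4$. Since Theorem~\ref{main thm2} is asserted for every positive integer $r$ and all nonnegative $k_1,\ldots,k_r$, it suffices to substitute $r=4$ into the general formula (\ref{main2}) and transcribe the indexing.

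Concretely, I would first set $r=4$, so that the distinguished last variable $k_r$ becomes $k_4$ while the remaining $r-1=3$ variables are $k_1,k_2,k_3$. The upper limit $k_1+\cdots+k_{r-1}$ of the outer summation then reads $k_1+k_2+k_3$. Next, the inner sum in (\ref{main2}), which ranges over all $(r-1)$-tuples $(i_1,\ldots,i_{r-1})$ of nonnegative integers with $i_1+\cdots+i_{r-1}=a$, becomes a sum over triples $(i_1,i_2,i_3)$ with $i_1+i_2+i_3=a$, and the multinomial weight $\binom{a}{i_1,\ldots,i_{r-1}}$ reduces to $\binom{a}{i_1,i_2,i_3}$. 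Finally, the summand $\widetilde{C}_{k_1-i_1,\ldots,k_{r-1}-i_{r-1},k_r+a+1}(x)$ specializes to $\widetilde{C}_{k_1-i_1,k_2-i_2,k_3-i_3,k_4+a+1}(x)$. Collecting these substitutions reproduces exactly the displayed identity.

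There is essentially no obstacle: the proposition is a literal instance of the main theorem, and the only point to verify is that the index bookkeeping transcribes correctly under $r=4$. In particular, the convention (\ref{I-zero}) that $\widetilde{I}$ vanishes when one of its lower indices is negative, together with the extension (\ref{mul-zero}) of the multinomial coefficients to the value $0$ outside the admissible range, guarantees that any term with $i_j>k_j$ contributes nothing, so no difficulty with the summation ranges arises. If one preferred a self-contained argument, one could instead rerun the integration-by-parts induction underlying Proposition~\ref{main thm} with four factors; but since Theorem~\ref{main thm2} is already available, direct specialization is both shorter and cleaner, and it is the route I would take.
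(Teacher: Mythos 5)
Your proposal is correct and matches the paper exactly: the paper introduces Proposition~\ref{pro-4} with the single line ``Letting $r=4$ in Theorem \ref{main thm2}, we have,'' i.e., it is obtained by the same direct specialization of (\ref{main2}) that you carry out. Your extra remarks on the conventions (\ref{I-zero}) and (\ref{mul-zero}) are sound but not needed beyond the transcription itself.
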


Let
$${I}_{k_1,k_2,k_3,k_4}(1)=\int_{0}^1 B_{k_1}(z)B_{k_2}(z)B_{k_3}(z)B_{k_4}(z)dz.$$
Write
$$\widetilde{B}_k=\frac1{k!}B_k.$$
Using (\ref{three-Eu}), (\ref{Eu-pol-3}) and Proposition \ref{pro-4}, we obtain  the following  equalities by integrating  from 0 to 1:
$$\begin{aligned}
\frac{{I}_{k_1,k_2,k_3,k_4}(1)}{k_1!k_2!k_3!k_4!}
&=\sum_{a=0}^{k_1+k_{2}+k_3}(-1)^a
\sum_{i_1+i_{2}+i_3=a}\binom a{i_1,i_{2},i_3} \\
&\quad\times((-1)^{k_1+k_2+k_3+k_4+1}-1) \\
&\quad\times
\widetilde{B}_{k_1-i_1}\widetilde{B}_{k_2-i_2}\widetilde{B}_{k_3-i_3}\widetilde{B}_{k_4+a+1},
\end{aligned}$$
so
\begin{equation}\label{4-int}
\begin{aligned}
\frac{{I}_{k_1,k_2,k_3,k_4}(1)}{k_1!k_2!k_3!k_4!}
&=2\sum_{a=0}^{k_1+k_{2}+k_3}(-1)^{a+1}\widetilde{B}_{k_4+a+1} \\
&\quad\times \sum_{i_1+i_{2}+i_3=a}\binom a{i_1,i_{2},i_3}
\widetilde{B}_{k_1-i_1}\widetilde{B}_{k_2-i_2}\widetilde{B}_{k_3-i_3}
\end{aligned}
\end{equation}
if $k_1+k_2+k_3+k_4\equiv0\pmod2$,
and
\begin{equation}\label{not0}
\frac{{I}_{k_1,k_2,k_3,k_4}(1)}{k_1!k_2!k_3!k_4!}
=0
\end{equation}
if $k_1+k_2+k_3+k_4\not\equiv0\pmod2.$
Therefore, without loss of generality we assume that
\begin{equation}\label{1-4=0}
k_1+k_2+k_3+k_4\equiv0\pmod2\quad\text{and}\quad k_4+a+1\equiv0\pmod2
\end{equation}
since $B_{k_4+a+1}=0$ if $k_4+a+1\not\equiv0\pmod2.$
By (\ref{1-4=0}), we see that
\begin{equation}\label{4-int-con}
\begin{aligned}
k_1+k_2+k_3+k_4+1&=(k_1-i_1)+(k_2-i_2)+(k_3-i_3)+(k_4+a+1)\\
&\equiv1\pmod2.
\end{aligned}
\end{equation}

 Thus from (\ref{4-int}) and (\ref{4-int-con}) we get the following table:

\begin{table}[htbp]
\begin{tabular}{|c||c|c|c|c|} \hline
{} & $k_1-i_1$ & $k_2-i_2$& $k_3-i_3$ &$k_4+a+1$ \\
 \hline
 $\text{(A)} $ & $ \text{odd} $ &$ \text{even} $ &$ \text{even} $ &  $\text{even}$ \\
\hline
 $\text{(B)} $ &$ \text{even} $  & $ \text{odd} $& $\text{even}$ & $\text{even}$ \\
\hline
$\text{(C)} $ &$ \text{even}$  & $\text{even}$& $\text{odd}$ & $\text{even}$  \\
\hline
$\text{(D)} $ &$\text{odd}$  & $\text{odd}$& $\text{odd}$ & $\text{even}$ \\
\hline
 \end{tabular}
\end{table}

From now on, we will use the fact that $B_{2i+1}=0$ for $i\geq1.$
Thus, if $k_j-i_j$ are odd for $j=1,2,3,$ then $k_j-i_j$ must be 1, so we only need to consider the following four cases:
\begin{enumerate}
\item[(A)] We put $i_1=k_1-1.$ Then by (\ref{1-4=0}) and $a=i_1+i_2+i_3,$ we have
$k_2-i_2\equiv k_3-i_3\pmod 2.$

\item[(B)] We put $i_2=k_2-1.$ Similarly, we have
$k_1-i_1\equiv k_3-i_3\pmod 2.$

\item[(C)] We put $i_3=k_3-1.$ Similarly, we have
$k_1-i_1\equiv k_2-i_2\pmod 2.$

\item[(D)] We have
$i_1=k_1-1,i_2=k_2-1,i_3=k_3-1$
since $B_{2i+1}=0$ for $i\geq1.$
\end{enumerate}

Using (\ref{4-int}), (\ref{not0}), (A), (B), (C) and (D), we obtain the following theorem.

\begin{theorem}\label{4-int-reslt}
\begin{enumerate}
\item
If $k_1+k_2+k_3+k_4\equiv0\pmod2,$ then
$$
\begin{aligned}
\frac{{I}_{k_1,k_2,k_3,k_4}(1)}{k_1!k_2!k_3!k_4!}
&=\sum_{a=0}^{k_1+k_{2}+k_3}(-1)^{a}\widetilde{B}_{k_4+a+1} \\
&\quad\times\biggl\{
 \binom{a}{k_1-1}\sum_{i=0}^{a-k_1+1}\binom {a-k_1+1}{i}
 \widetilde{B}_{k_3-i}\widetilde{B}_{k_1+k_2+i-a-1}
 \\
 &\quad\quad+\binom{a}{k_2-1}\sum_{i=0}^{a-k_2+1}\binom {a-k_2+1}{i}
\widetilde{B}_{k_1-i}\widetilde{B}_{k_2+k_3+i-a-1}
 \\
  &\quad\quad+\binom{a}{k_3-1}\sum_{i=0}^{a-k_3+1}\binom {a-k_3+1}{i}
\widetilde{B}_{k_1-i}\widetilde{B}_{k_2+k_3+i-a-1} \biggl\}
\\
&\quad
+\frac12(-1)^{k_1+k_2+k_3}\binom{k_1+k_2+k_3-3}{k_1-1}
\binom{k_2+k_3-2}{k_2-1}\\
&\quad\quad\times\widetilde{B}_{k_1+k_2+k_3+k_4-2}.
\end{aligned}
$$
\item
If $k_1+k_2+k_3+k_4\not\equiv0\pmod2,$ then
$$\frac{{I}_{k_1,k_2,k_3,k_4}(1)}{k_1!k_2!k_3!k_4!}
=0.$$
\end{enumerate}
\end{theorem}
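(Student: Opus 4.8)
The plan is to start from the two evaluations at $x=1$ already derived above: the formula (\ref{4-int}) in the even case $k_1+k_2+k_3+k_4\equiv0\pmod2$, and the vanishing (\ref{not0}) in the odd case. Part (2) of the theorem is then literally (\ref{not0}), so all the work goes into part (1). For part (1) I would extract from the double sum in (\ref{4-int}) exactly the terms that do not vanish, using only the fact $B_{2i+1}=0$ for $i\geq1$. Since $\widetilde{B}_{k_4+a+1}=0$ whenever $k_4+a+1$ is odd and exceeds $1$, only the indices $a$ with $k_4+a+1$ even contribute; for such $a$ the parity identity (\ref{4-int-con}) forces $(k_1-i_1)+(k_2-i_2)+(k_3-i_3)$ to be odd, so an odd number of the shifted indices $k_1-i_1,\,k_2-i_2,\,k_3-i_3$ are odd. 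A surviving term additionally needs every odd shifted index to equal $1$, because $B_m=0$ for odd $m\geq3$.

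Next I would split the surviving terms according to whether exactly one, or all three, of the shifted indices are odd; these are precisely the cases (A), (B), (C), (D) tabulated above. In case (A) the odd index is $k_1-i_1=1$, i.e. $i_1=k_1-1$, and the accompanying factor $\widetilde{B}_1=-\tfrac12$ is absorbed into the coefficient through the identity $2(-1)^{a+1}\widetilde{B}_1=(-1)^a$. Factoring the trinomial coefficient as $\binom{a}{k_1-1,i_2,i_3}=\binom{a}{k_1-1}\binom{a-k_1+1}{i}$ and reindexing the two remaining summation indices by the single variable $i$ (with $i_2+i_3=a-k_1+1$) produces exactly the first bracketed sum of the theorem; the symmetric substitutions $i_2=k_2-1$ and $i_3=k_3-1$ give the second and third bracketed sums. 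In case (D) all three shifted indices equal $1$, which forces $i_1=k_1-1,\,i_2=k_2-1,\,i_3=k_3-1$ and hence the single value $a=k_1+k_2+k_3-3$, yielding one explicit term.

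The point requiring real care, and what I expect to be the crux, is that these cases overlap. For an admissible $a$ the constraint makes $(k_2-i_2)+(k_3-i_3)$ even in case (A), so the two indices left free there share a common parity: either both even (a genuine case-(A) term) or both equal to $1$ (which is the case-(D) configuration). Thus the first bracketed sum silently contains the case-(D) term, and likewise for the second and third sums, so adding the three brackets counts the case-(D) contribution three times, whereas (\ref{4-int}) counts it once. The discrepancy must be corrected by subtracting twice that contribution. Evaluating the case-(D) term directly, with $a=k_1+k_2+k_3-3$, $\widetilde{B}_1^{\,3}=-\tfrac18$, trinomial coefficient $\binom{k_1+k_2+k_3-3}{k_1-1}\binom{k_2+k_3-2}{k_2-1}$, sign $(-1)^{a+1}=(-1)^{k_1+k_2+k_3}$, and $\widetilde{B}_{k_4+a+1}=\widetilde{B}_{k_1+k_2+k_3+k_4-2}$, gives the value $-\tfrac14(-1)^{k_1+k_2+k_3}\binom{k_1+k_2+k_3-3}{k_1-1}\binom{k_2+k_3-2}{k_2-1}\widetilde{B}_{k_1+k_2+k_3+k_4-2}$. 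Subtracting twice this from the sum of the three brackets introduces exactly the final term of the theorem, with its coefficient $+\tfrac12$, and completes the proof; I anticipate that tracking this triple overlap, rather than any single algebraic identity, is where all the difficulty lies.
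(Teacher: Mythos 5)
Your proposal is correct and is essentially the paper's own argument: part (2) is read off from (\ref{not0}), and part (1) is extracted from (\ref{4-int}) via the parity cases (A)--(D) tabulated in the paper. In fact you supply the one genuinely delicate step that the paper's one-line proof (``Using (\ref{4-int}), (\ref{not0}), (A), (B), (C) and (D), we obtain the following theorem'') leaves completely implicit: each of the three bracketed sums silently contains the case-(D) configuration, so adding them counts it three times, and correcting by $-2$ times its value $-\tfrac14(-1)^{k_1+k_2+k_3}\binom{k_1+k_2+k_3-3}{k_1-1}\binom{k_2+k_3-2}{k_2-1}\widetilde{B}_{k_1+k_2+k_3+k_4-2}$ produces exactly the final term with coefficient $+\tfrac12$; your arithmetic here checks out.

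One caveat, which you inherit from the paper (your phrase ``odd and exceeds $1$'' shows you noticed it, but your very next clause discards the exception): when $k_4=a=0$ the factor $\widetilde{B}_{k_4+a+1}=B_1=-\tfrac12$ does not vanish, so the $a=0$ term $\widetilde{B}_{k_1}\widetilde{B}_{k_2}\widetilde{B}_{k_3}$ of (\ref{4-int}) survives even though $k_4+a+1$ is odd, and it is not captured by any of the three brackets unless some $k_j=1$. Consequently the argument, and indeed the theorem as literally stated, really requires $k_4\geq1$: for instance $(k_1,k_2,k_3,k_4)=(2,2,2,0)$ gives $I_{2,2,2,0}(1)/8=1/30240$, whereas the displayed right-hand side evaluates to $-11/20160$, the discrepancy being precisely the dropped term $\widetilde{B}_2^{\,3}=1/1728$. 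This is a defect of the paper's proof as much as of yours, so it does not distinguish the two; but if you want your write-up to be airtight, add the hypothesis $k_4\geq1$ (harmless, by the symmetry of $I_{k_1,k_2,k_3,k_4}$ one can permute a positive index into the last slot, and the corollary's application uses $k_4=m\geq1$) or treat the $k_4=a=0$ term separately.
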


\begin{remark}
Our formula in Theorem \ref{4-int-reslt} is equivalent to (5) in Carlitz \cite[p.~361]{Ca}.
\end{remark}

\begin{example}

Using  Theorem \ref{4-int-reslt}, we immediately get several examples for the integral of the product of four Bernoulli polynomials.

\vskip3mm

 \scriptsize{
\begin{tabular}{l||l} \hline
 $ I_{1,1,1,1}(1) $ & $ \text{(a)} \frac{3}{2}B_1^2B_2 + \frac{1}{4}B_4 - \frac{1}{4}B_2$ \\
\hline

 $ I_{1,1,1,3}(1) $ & $ \text{(a)} \frac{3}{4}B_1^2B_4 + \frac{1}{20}B_6 -
 \frac{1}{8}B_4$ \\
 $  $ & $ \text{(b)} \frac{1}{2}B_2B_4 + \frac{3}{4}B_1^2B_4 + \frac{1}{6}B_6 - \frac{1}{8}B_4$ \\
\hline

 $ I_{1,1,1,5}(1) $ & $\text{(a)}  \frac{1}{2}B_1^2B_6 + \frac{1}{56}B_8 - \frac{1}{12}B_6$ \\
 $  $ & $ \text{(b)}  \frac{5}{6}B_4^2 + \frac{1}{2}B_1^2B_6 + \frac{2}{3}B_2B_6 + \frac{1}{8}B_8 - \frac{1}{12}B_6$ \\
\hline

 $ I_{1,1,2,2}(1) $ & $ \text{(a)} -\frac{1}{6}B_2B_4 - \frac{1}{2}B_1^2B_4 - \frac{1}{15}B_6 + \frac{1}{12}B_4$ \\
 $ $ & $ \text{(b)} \frac{1}{2}B_2^3 + \frac{1}{2}B_2B_4 + B_1^2B_4 + \frac{1}{6}B_6 - \frac{1}{6}B_4$ \\
\hline

 $ I_{1,1,2,4}(1) $ & $ \text{(a)} -\frac{1}{15}B_2B_6 - \frac{1}{5}B_1^2B_6 - \frac{1}{70}B_8 + \frac{1}{30}B_6$ \\
 $  $ & $  \text{(b)}  -\frac{1}{6}B_4^2 - \frac{1}{5}B_1^2B_6 - \frac{1}{5}B_2B_6 - \frac{1}{28}B_8 + \frac{1}{30}B_6$ \\
 $  $ & $  \text{(c)} B_2^2B_4 + \frac{1}{4}B_4^2 + \frac{23}{30}B_2B_6 + \frac{4}{5}B_1^2B_6 + \frac{1}{8}B_8 - \frac{2}{15}B_6$ \\
\hline

 \end{tabular}
}
\end{example}

Letting
$k_1=0$ and $k_2=k,k_3=l,k_4=m$ in Theorem \ref{4-int-reslt}, we obtain the following formula for the integral of the product of three Bernoulli numbers:

\begin{corollary}[{\cite[Corollary 1]{AD2}}] \label{AD-co}For all $k,l,m\geq1,$
$$
I_{k,l,m}(1)=
\begin{cases}
(-1)^{m+1}k!l!m!
\sum_{a=0}^{k+l}\left[\binom a{l-1}+\binom a{k-1}\right]\widetilde{B}_{m+a+1}\widetilde{B}_{k+l-a-1} &\\
\qquad\qquad\qquad\qquad\qquad\qquad\;\text{if }k+l+m\equiv0\pmod{2}
\\
0\qquad\qquad\qquad\qquad\qquad\quad\;\;\;\text{if }k+l+m\not\equiv0\pmod{2}.
\end{cases}
$$
\end{corollary}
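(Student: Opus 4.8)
The plan is to deduce this directly from Theorem \ref{4-int-reslt} by the substitution $k_1=0$, $k_2=k$, $k_3=l$, $k_4=m$. Since $B_0(z)=1$, the four-fold integral collapses to the three-fold one, $I_{0,k,l,m}(1)=\int_0^1 B_k(z)B_l(z)B_m(z)\,dz=I_{k,l,m}(1)$, and because $0!=1$ the left-hand side of Theorem \ref{4-int-reslt} becomes exactly $\frac{I_{k,l,m}(1)}{k!l!m!}$. The hypothesis $k_1+k_2+k_3+k_4\equiv0\pmod2$ now reads $k+l+m\equiv0\pmod2$, so part (2) of Theorem \ref{4-int-reslt} instantly gives $I_{k,l,m}(1)=0$ when $k+l+m$ is odd; this is the second branch of the Corollary, and no further work is needed there.

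For the case $k+l+m\equiv0\pmod2$ I would first discard the vanishing contributions in part (1). The summand labelled (A) carries the factor $\binom{a}{k_1-1}=\binom{a}{-1}$, which is $0$ by the extension (\ref{mul-zero}); likewise the final ``diagonal'' term carries $\binom{k_1+k_2+k_3-3}{k_1-1}=\binom{k+l-3}{-1}=0$. Hence both drop out, leaving only the summands (B) and (C). Next I would collapse their inner $i$-sums: each contains the factor $\widetilde B_{k_1-i}=\widetilde B_{-i}$, which vanishes for $i\ge1$ by the convention $\widetilde B_j=0$ for $j<0$, while $\widetilde B_0=B_0=1$ and $\binom{a-k_j+1}{0}=1$. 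Thus only $i=0$ survives, so (B) contributes $\binom{a}{k-1}\widetilde B_{k+l-a-1}$ and (C) contributes $\binom{a}{l-1}\widetilde B_{k+l-a-1}$. Inserting $k_4=m$ into the prefactor $\widetilde B_{k_4+a+1}$ yields
$$
\frac{I_{k,l,m}(1)}{k!l!m!}=\sum_{a=0}^{k+l}(-1)^a\widetilde B_{m+a+1}\Bigl[\binom{a}{k-1}+\binom{a}{l-1}\Bigr]\widetilde B_{k+l-a-1}.
$$

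The remaining point, and the only one requiring any argument, is converting the sign $(-1)^a$ into the constant $(-1)^{m+1}$. In any term with $\widetilde B_{m+a+1}\neq0$ one must have $m+a+1$ even (it is $\ge2$, and $B_j=0$ for odd $j\ge3$), so $a\equiv m+1\pmod2$ and therefore $(-1)^a=(-1)^{m+1}$; the terms with $m+a+1$ odd vanish in any case. Pulling this common sign out of the sum and multiplying through by $k!l!m!$ reproduces exactly the first branch of the Corollary. I expect the main (mild) obstacle to be precisely this parity bookkeeping, together with recognizing that the boundary value $k_1=0$ forces the $i$-sums to degenerate and kills the (A) and diagonal terms; everything else is a routine specialization of Theorem \ref{4-int-reslt}.
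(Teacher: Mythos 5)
Your proof is correct and takes exactly the paper's route: the paper also obtains this corollary simply by setting $k_1=0$, $k_2=k$, $k_3=l$, $k_4=m$ in Theorem \ref{4-int-reslt}. Your additional bookkeeping---discarding term (A) and the diagonal term via $\binom{a}{-1}=0$, collapsing the inner sums to the $i=0$ term since $\widetilde{B}_{-i}=0$ for $i\geq1$, and converting $(-1)^a$ to $(-1)^{m+1}$ by the parity of $m+a+1$---just makes explicit the details the paper leaves to the reader.
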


\section{Proof of Proposition \ref{main thm}}

Using integration by parts, along with (\ref{three-Eu}) and
(\ref{Eu-pol-1}), we obtain
\begin{equation}\label{part-int}
\begin{aligned}
I_{k_1,\ldots,k_r}(x)&=\frac1{k_r+1}\int_{0}^x B_{k_1}(z)\cdots B_{k_{r-1}}(z)\frac{\text{d}}{\text{d}z}B_{k_r+1}(z)dz \\
&=\frac1{k_r+1}\left[B_{k_1}(z)\cdots B_{k_{r-1}}(z)B_{k_r+1}(z)\right]_0^x \\
&\quad-\frac1{k_r+1}\int_{0}^x \frac{\text{d}}{\text{d}z}(B_{k_1}(z)\cdots B_{k_{r-1}}(z))B_{k_r+1}(z)dz \\
&=\frac1{k_r+1}C_{k_1,\ldots,k_{r-1},k_r+1}(x) \\
&\quad-\frac1{k_r+1}\int_{0}^x
\frac{\text{d}}{\text{d}z}(B_{k_1}(z)\cdots
B_{k_{r-1}}(z))B_{k_r+1}(z)dz.
\end{aligned}
\end{equation}
From (\ref{Eu-pol-1}) we have
\begin{equation}
\begin{aligned}
&\frac{\text{d}}{\text{d}z}(B_{k_1}(z)\cdots B_{k_{r-1}}(z))\\
&\quad=k_1B_{k_1-1}(z)\cdots B_{k_{r-1}}(z)\\
&\quad\quad+\cdots+ k_{r-1}B_{k_1}(z)\cdots B_{k_{r-1}-1}(z).
\end{aligned}
\end{equation}
 From this and (\ref{three-Eu}) we have
\begin{equation}\label{right-hand}
\begin{aligned}
&\frac1{k_r+1}\int_{0}^x \frac{\text{d}}{\text{d}z}(B_{k_1}(z)\cdots B_{k_{r-1}}(z))B_{k_r+1}(z)dz\\
&=\frac{k_1}{k_r+1}
\int_{0}^x B_{k_1-1}(z)\cdots B_{k_{r-1}}(z)B_{k_r+1}(z)dz \\
&\quad+\cdots +\frac{k_{r-1}}{k_r+1}
\int_{0}^x B_{k_1}(z)\cdots B_{k_{r-1}-1}(z)B_{k_r+1}(z)dz \\
&=\frac{k_1}{k_r+1}I_{k_1-1,\ldots,k_{r-1},k_r+1}(x)+\cdots+\frac{k_{r-1}}{k_r+1}I_{k_1,\ldots,k_{r-1}-1,k_r+1}(x),
\end{aligned}
\end{equation}
and substituting  (\ref{right-hand}) into (\ref{part-int}) we get
\begin{equation}\label{part-int-2}
\begin{aligned}
I_{k_1,\ldots,k_r}(x)
&=\frac1{k_r+1}C_{k_1,\ldots,k_{r-1},k_r+1}(x) \\
&\quad-\frac1{k_r+1}(k_1I_{k_1-1,\ldots,k_{r-1},k_r+1}(x)+\cdots+k_{r-1}I_{k_1,\ldots,k_{r-1}-1,k_r+1}(x)).
\end{aligned}
\end{equation}
From (\ref{three-Eu}) and (\ref{part-int-2}), we have
\begin{equation}\label{part-int-3}
\begin{aligned}
\widetilde{I}_{k_1,\ldots,k_r}(x)
=\widetilde{C}_{k_1,\ldots,k_{r-1},k_r+1}(x)
-(\widetilde{I}_{k_1-1,\ldots,k_{r-1},k_r+1}(x)+\cdots+\widetilde{I}_{k_1,\ldots,k_{r-1}-1,k_r+1}(x)).
\end{aligned}
\end{equation}
Now we prove the case when $\mu\geq1$ in Proposition \ref{main
thm} by induction. For $\mu=1$ it just reduces to
(\ref{part-int-3}). Now we assume that Proposition \ref{main thm}
holds for some $\mu\geq1.$ Write
\begin{equation}\label{set-S}
\begin{aligned}
S=\sum_{i_1+\cdots+i_{r-1}=\mu}\binom
\mu{i_1,\ldots,i_{r-1}}\widetilde{I}_{k_1-i_1,\ldots,k_{r-1}-i_{r-1},k_r+\mu}(x).
\end{aligned}
\end{equation}
Equation (\ref{main}) becomes \begin{equation}\label{main1}
\begin{aligned}
\widetilde{I}_{k_1,\ldots,k_r}(x) &=\sum_{a=0}^{\mu-1}(-1)^a
\sum_{i_1+\cdots+i_{r-1}=a}\binom a{i_1,\ldots,i_{r-1}} \\
&\quad\times
\widetilde{C}_{k_1-i_1,\ldots,k_{r-1}-i_{r-1},k_r+a+1}(x)\\
&\quad +(-1)^\mu S.
\end{aligned}\end{equation}
From (\ref{part-int-3}) we have
\begin{equation}\label{part-int-4}
\begin{aligned}
S&=\sum_{i_1+\cdots+i_{r-1}=\mu}\binom \mu{i_1,\ldots,i_{r-1}}
\widetilde{C}_{k_1-i_1,\ldots,k_{r-1}-i_{r-1},k_r+\mu+1}(x) \\
&\quad -\sum_{i_1+\cdots+i_{r-1}=\mu}\binom
\mu{i_1,\ldots,i_{r-1}}
\widetilde{I}_{k_1-(i_1+1),\ldots,k_{r-1}-i_{r-1},k_r+\mu+1}(x)\\
&\quad-\cdots \\
&\quad- \sum_{i_1+\cdots+i_{r-1}=\mu}\binom
\mu{i_1,\ldots,i_{r-1}}
\widetilde{I}_{k_1-i_1,\ldots,k_{r-1}-(i_{r-1}+1),k_r+\mu+1}(x).
\end{aligned}
\end{equation}
Let $i_j'=i_j+1$ in (\ref{part-int-4}) for $j=1,\ldots,r-1.$ From
(\ref{mul-zero}), we obtain
\begin{equation}\label{m-bi-0}
\begin{aligned}
\binom{\mu}{i_1'-1,\ldots,i_{r-1}}= 0 \text{ if
$i_1'=0$},\ldots,\binom{\mu}{i_1,\ldots,i_{r-1}'-1}= 0 \text{ if
$i_{r-1}'=0$},
\end{aligned}
\end{equation}
and
\begin{equation}\label{zero-B}
\begin{aligned}
&\binom{\mu}{i_1'-1,\ldots,i_{r-1}}=
0 \quad\text{if $i_2=\mu+1,\ldots,i_{r-1}=\mu+1$},\\
&\;\ldots,\\
&\binom{\mu}{i_1,\ldots,i_{r-1}'-1}= 0 \quad\text{if
$i_1=\mu+1,\ldots,i_{r-2}=\mu+1$}.
\end{aligned}
\end{equation}
Then we have
\begin{equation}\label{main3}
\begin{aligned}
&\sum_{i_1+\cdots+i_{r-1}=\mu}\binom \mu{i_1,\ldots,i_{r-1}}
\widetilde{I}_{k_1-(i_1+1),\ldots,k_{r-1}-i_{r-1},k_r+\mu+1}(x)
\\&=\sum_{i_1'+\cdots+i_{r-1}=\mu+1}\binom \mu{i_1'-1,\ldots,i_{r-1}}
\widetilde{I}_{k_1-i_1',\ldots,k_{r-1}-i_{r-1},k_r+\mu+1}(x),
\\
&\;\ldots,\\
&\sum_{i_1+\cdots+i_{r-1}=\mu}\binom \mu{i_1,\ldots,i_{r-1}}
\widetilde{I}_{k_1-i_1,\ldots,k_{r-1}-(i_{r-1}+1),k_r+\mu+1}(x)\\&=\sum_{i_1+\cdots+i_{r-1}'=\mu+1}\binom
\mu{i_1,\ldots,i_{r-1}'-1}
\widetilde{I}_{k_1-i_1,\ldots,k_{r-1}-i_{r-1}',k_r+\mu+1}(x).
\end{aligned}
\end{equation}
From (\ref{part-int-4}) and (\ref{main3}) we get
\begin{equation}\label{part-int-5}
\begin{aligned}
S&=\sum_{i_1+\cdots+i_{r-1}=\mu}\binom \mu{i_1,\ldots,i_{r-1}}
\widetilde{C}_{k_1-i_1,\ldots,k_{r-1}-i_{r-1},k_r+\mu+1}(x) \\
&\quad -\sum_{i_1'+\cdots+i_{r-1}=\mu+1}\binom
\mu{i_1'-1,\ldots,i_{r-1}}
\widetilde{I}_{k_1-i_1',\ldots,k_{r-1}-i_{r-1},k_r+\mu+1}(x) \\
&\quad-\cdots \\
&\quad- \sum_{i_1+\cdots+i_{r-1}'=\mu+1}\binom
\mu{i_1,\ldots,i_{r-1}'-1}
\widetilde{I}_{k_1-i_1,\ldots,k_{r-1}-i_{r-1}',k_r+\mu+1}(x).
\end{aligned}
\end{equation}
Let $i_j=i_j'$ for $j=1,\ldots,r-1$ in (\ref{part-int-5}). From
Lemma \ref{main-lem}, (\ref{part-int-5}) becomes
\begin{equation}\label{part-int-6}
\begin{aligned}
S &=\sum_{i_1+\cdots+i_{r-1}=\mu}\binom \mu{i_1,\ldots,i_{r-1}}
\widetilde{C}_{k_1-i_1,\ldots,k_{r-1}-i_{r-1},k_r+\mu+1}(x) \\
&\quad -\sum_{i_1+\cdots+i_{r-1}=\mu+1} \left[\binom
\mu{i_1-1,\ldots,i_{r-1}} +\cdots+\binom \mu{i_1,\ldots,i_{r-1}-1}
\right]
\\
&\quad\times
\widetilde{I}_{k_1-i_1,\ldots,k_{r-1}-i_{r-1},k_r+\mu+1}(x)\\
&=\sum_{i_1+\cdots+i_{r-1}=\mu}\binom \mu{i_1,\ldots,i_{r-1}}
\widetilde{C}_{k_1-i_1,\ldots,k_{r-1}-i_{r-1},k_r+\mu+1}(x) \\
&\quad -\sum_{i_1+\cdots+i_{r-1}=\mu+1}
\binom{\mu+1}{i_1,\ldots,i_{r-1}}
\widetilde{I}_{k_1-i_1,\ldots,k_{r-1}-i_{r-1},k_r+\mu+1}(x).
\end{aligned}
\end{equation}
Substituting (\ref{part-int-6}) into ~(\ref{main1}), we get
Proposition \ref{main thm} when $\mu+1$, which completes our
proof.

\section{Proof of Theorem \ref{main thm2}}

 In order to have $\widetilde{I}_{k_1-i_1,\ldots,k_{r-1}-i_{r-1},k_r+\mu}(x)\neq0$
in Proposition \ref{main thm}, from (\ref{I-zero}) we  require
\begin{equation}\label{con-1}
k_1-i_1\geq0, \ldots,k_{r-1}-i_{r-1}\geq0.
\end{equation}
Thus
\begin{equation}\label{con-2}
\begin{aligned}
&(k_1-i_1)+\cdots+(k_{r-1}-i_{r-1}) \\
&=k_1+\cdots+k_{r-1}-(i_1+\cdots+i_{r-1}) \\
&=k_1+\cdots+k_{r-1}-\mu
\\
&\geq0.
\end{aligned}
\end{equation}
If we let $\mu=k_1+\cdots+k_{r-1}+1$ in (\ref{con-2}), we have
$k_1+\cdots+k_{r-1}-\mu<0,$ which  contradicts   (\ref{con-1}).
Therefore  when $\mu=k_1+\cdots+k_{r-1}+1,$ one of
$$k_1-i_1,\ldots,k_{r-1}-i_{r-1}$$
will always be negative. Thus from  (\ref{I-zero}), all the  terms
$$\widetilde{I}_{k_1-i_1,\ldots,k_{r-1}-i_{r-1},k_1+\cdots+k_{r-1}+k_r+1}(x)$$
vanish. The result then follows from Proposition \ref{main thm}.

\textbf{Acknowledgement} The authors are grateful to the anonymous
referee for his/her helpful comments. This work was supported by
the Kyungnam University Foundation Grant, 2013.

\bibliography{central}

\end{document}